\newtheorem{theorem}{Theorem}
\newtheorem{corollary}{Corollary}
\begin{document}
\author{N.Areshidze and G. Tephnadze}

\title[Approximation by Nörlund means in Lebesgue spaces ]{Approximation by Nörlund means with respect to Walsh system in Lebesgue spaces}
\address{N.Areshidze, Tbilisi State University, Faculty of Exact and Natural Sciences, Department of Mathematics, Chavchavadze str. 1, tbilisi 0128, georgia}
\email{nika.areshidze15@gmail.com}
\address {G. Tephnadze, The University of Georgia, School of Science and Technology, 77a Merab Kostava St, Tbilisi, 0128, Georgia.}
\email{g.tephnadze@ug.edu.ge}

\thanks{The research was supported by Shota Rustaveli National Science Foundation grant  FR-21-2844.}

\begin{abstract}
In this paper we improve and complement a result by M\'oricz and Siddiqi \cite{Mor}. In particular, we prove that their inequality of the Nörlund means with respect to the Walsh system holds also without their additional condition. Moreover, we prove some new approximation results and inequalities in Lebesgue spaces for any $1\leq p<\infty$.
\end{abstract}
\maketitle 
\date{}

\noindent \textbf{2010 Mathematics Subject Classification:} 42C10, 42B30.

\noindent \textbf{Key words and phrases:} Walsh group, Walsh system,  Fejér means, Nörlund means, approximation, inequalities.

\section{Introduction}

Concerning some definitions and notations used in this introduction we refer to Section 2.   
Fejér's theorem shows that (see e.g.  \cite{gat} and \cite{gol}) if one replaces ordinary summation by Fejér means $\sigma_n,$ defined by
\begin{equation*}
\sigma_n f:=\frac{1}{n}\sum_{k=1}^nS_kf, 
\end{equation*}
 then, for any $1\leq p\leq \infty,$ there exists an absolute constant $C_p,$ depending only on $p$ such that the inequality
\begin{equation*}
\left\Vert \sigma_nf\right\Vert_p\leq C_p\left\|f\right\|_{p}
\end{equation*}
holds.
Moreover, (see e.g. \cite{PTWbook})	let $1\leq p\leq \infty $, $2^N\leq n< 2^{N+1}$, $f\in L^p(G)$ and $n\in \mathbb{N}.$ Then the following inequality holds:
\begin{eqnarray}\label{aaa}
\left\Vert \sigma_{n}f-f\right\Vert_{p}
\leq 3\sum_{s=0}^{N} \frac{2^{s}}{2^{N}}\omega_p\left(1/2^s,f\right).
\end{eqnarray}

It follows that if $f\in lip\left( \alpha ,p\right) ,$ i.e.
	\begin{equation*}
	\omega_{p}\left( 1/2^{n},f\right) =O\left( 1/2^{n\alpha }\right), \ \text{as} \ n\rightarrow \infty,
	\end{equation*}
	then
	
	\begin{equation*}
	\left\Vert\sigma_nf-f\right\Vert _{p}=\left\{
	\begin{array}{c}O\left(1/2^N\right), \text{ \ \ \ if \ \ \ }\alpha >1,\\
	O\left(N/2^N\right),\text{ \ \ \ if \ \ \ }\alpha=1,\\
	O\left( 1/2^{N\alpha }\right) ,\text{ \ \ \ \ \ \ \ if }\alpha <1.\end{array}\right.
	\end{equation*}
Moreover, (see \cite{PTWbook}) if $1\leq p< \infty ,$ $f\in L^{p}(G)$ and
	\begin{equation*}
	\left\Vert \sigma _{2^{n}}f-f\right\Vert_{p}=o\left( 1/2^{n}\right), \ \text{as} \ n\rightarrow \infty,
	\end{equation*}
then $f$  is a constant function.

Boundedness of maximal operators of Vilenkin-Fejer means and  weak-$(1,1)$ type inequality 

\begin{equation*}
\mu \left( \sigma ^{*}f>\lambda \right) \leq \frac{c}{\lambda }\left\|
f\right\| _{1},\text{ \qquad }\left( f\in L_1(G), \ \ \lambda >0\right) 
\end{equation*}
can be found in Zygmund \cite{Zy}  for trigonometric series, in Schipp
\cite{Sc} for Walsh series and in Pál, Simon \cite{PS} and Weisz \cite{We1,We3}  for bounded Vilenkin
series. 

Convergence and summability of  N\"orlund means were studied by several authors. We mentioned Areshidze, Baramidze, Persson and Tephnadze \cite{ABPT}, Fridli, Manchanda,  Siddiqi \cite{FMS}, Persson, Tephnadze and Weisz \cite{PTWbook} (see also \cite{PSTW}), Blahota and Nagy \cite{BN} (see also \cite{BNT} and \cite{na}). M\'oricz and Siddiqi \cite{Mor} investigated the approximation properties of
some special N\"orlund means of Walsh-Fourier series of $L^{p}$ functions in
norm. In particular, they proved that if $f\in L^p(G),$ $1\leq p\leq \infty,$ $n=2^j+k,$ $1\leq k\leq 2^j \ (n\in \mathbb{N}_+)$ and $(q_k,k\in \mathbb{N})$ is a sequence of non-negative numbers, such that
\begin{equation}\label{MScond}
\frac{n^{\gamma-1}}{Q_n^{\gamma}}\sum_{k=0}^{n-1}q^{\gamma}_k =O(1),\ \ \text{for some} \ \ 1<\gamma\leq 2,
\end{equation}
then
\begin{equation} \label{MSes}
\Vert t_nf-f\Vert_p\leq \frac{C_p}{Q_n} \sum_{i=0}^{j-1}2^iq_{n-2^i}\omega_p\left(\frac{1}{2^i},f\right)+C_p\omega_p\left( \frac{1}{2^j},f\right),
\end{equation}
when $(q_k,k\in \mathbb{N})$ is non-decreasing, while 

$$
\Vert t_nf-f\Vert_p\leq \frac{C_p}{Q_n} \sum_{i=0}^{j-1}\left(Q_{n-2^i+1}-Q_{n-2^{i+1}+1} \right)\omega_p\left(\frac{1}{2^i},f\right) +C_p\omega_p\left( \frac{1}{2^j},f\right),
$$
when  $(q_k,k\in \mathbb{N})$ is non-increasing.

In this paper we improve and complement a result by M\'oricz and Siddiqi \cite{Mor}. In particular, we prove that their estimate of the Nörlund means with respect to the Walsh system holds also without their additional condition. Moreover, we prove a similar approximation result  in Lebesgue spaces for any $1\leq p<\infty$.

The paper is organized as follows: The main results are presented, proved and discussed
in Section 3. In particular, Theorems 1, 2 and 3 are parts of this new approach. In order not to disturb the presentations in Section 3, we use Section 2 for some necessary preliminaries.

\section{Preliminaries}

Let  $\mathbb{N}_{+}$ denote the set of the positive integers, $\mathbb{N}:=
\mathbb{N}_{+}\cup \{0\}.$ Denote by 
$
Z_{2}:=\{0,1\}
$
the additive group of integers modulo $2.$

Define the group $G$ as the complete direct product of the group $%
Z_{2}$ with the product of the discrete topologies of $Z_{2}\textrm{'s}$.
The direct product $\mu $ of the measures 
$
\mu^*(\left( \{j\}\right):=1/2\text{ \ }(j\in Z_{2})
$
is the Haar measure on $G$ with $\mu \left( G\right) =1.$

The elements of $G$ are represented by the sequences 
\begin{equation*}
x:=(x_{0},x_{1},\dots,x_{k},\dots)\qquad \left( \text{ }x_{k}\in
Z_{2}\right).
\end{equation*}
It is easy to give a base for the neighborhood of $G$, namely
\begin{eqnarray*}
I_{0}\left( x\right):=G, \ \ \
I_{n}(x):=\{y\in G\mid y_{0}=x_{0},\dots,y_{n-1}=x_{n-1}\}\text{ }(x\in
G,\text{ }n\in \mathbb{N}).
\end{eqnarray*}
Denote $I_n(0)$ by $I_n$ i.e $I_n :=I_n(0).$ It is well-known that every $n\in \mathbb{N}$ can be uniquely expressed as
$$
n=\sum_{k=0}^{\infty }n_{j}2^{j}, \ \ \ \text{where } \ \ \ n_{j}\in Z_{2}  \ \ \ (j\in \mathbb{
N})$$ 
and only a finite number of $n_{j}`$s differ from zero. 


First define the Rademacher functions as 

\begin{equation*}
r_{k}\left( x\right):={(-1)}^{x_k}, \text{ \ \ }\left(k\in \mathbb{N}\right).
\end{equation*}
Now we define the Walsh system $w:=(w_{n}:n\in \mathbb{N})$ on $G$ as

\begin{equation*}
w _{n}\left( x\right):=\prod_{k=0}^{\infty }r_{k}^{n_{k}}\left( x\right) 
\text{ \quad }\left( n\in \mathbb{N}\right).
\end{equation*}

The Walsh system is orthonormal and complete in $L^{2}\left( G\right)
\,$ (see e.g. \cite{sws}).

If $f\in L^{1}\left( G\right) $, then we can define the Fourier
coefficients, the partial sums of the Fourier series, the Fejér means, the Dirichlet and Fejér kernels 
with respect to the Walsh system in
the usual manner:
\begin{eqnarray*}
	\widehat{f}\left( k\right) &:&=\int_{G}fw_{k}d\mu ,\,%
	\text{\quad }\left(k\in \mathbb{N}\right) , \\
	S_{n}f &:&=\sum_{k=0}^{n-1}\widehat{f}\left( k\right) w _{k},\text{
		\quad }\left(n\in \mathbb{N}_{+},\text{ }S_{0}f:=0\right) , \\
	\sigma _{n}f &:&=\frac{1}{n}\sum_{k=1}^{n}S_{k}f,\text{ \quad \ \  }\left(n\in \mathbb{N}_{+}\right).\\
 D_{n}&:&=\sum_{k=0}^{n-1}\psi_{k},\text{ \quad \ \  }\left(n\in \mathbb{N}_{+}\right).\\
\end{eqnarray*}
\begin{eqnarray*}
 K_n&:&=\frac{1}{n}\sum_{k=1}^{n}D_{k},\text{ \quad \ \  }\left(n\in \mathbb{N}_{+}\right).\
\end{eqnarray*}
Recall that for Dirichlet and Fejér kernels $D_n$ and $K_n$ we have that (see e.g. \cite{gat})
\begin{equation}\label{dn2.3}
\quad \hspace*{0in}D_{2^{n}}\left( x\right) =\left\{
\begin{array}{l}
\text{ }2^{n},\text{\thinspace \thinspace \thinspace  if\thinspace
	\thinspace }x\in I_{n}, \\
\text{ }0,\text{\thinspace \thinspace \thinspace \thinspace \thinspace \thinspace \thinspace \thinspace if
	\thinspace \thinspace }x\notin I_{n},
\end{array}
\right.  
\end{equation}

\begin{equation}\label{dn2.4}
D_{2^n-m}\left( x \right)=D_{2^n}\left( x \right)-w_{2^n-1}\left( x \right){{D}_{m}}\left( x \right),\,\,\,0\le m<{2^{n}}
\end{equation}

\begin{equation} \label{fn5}
n\left\vert K_n\right\vert\leq
3\sum_{l=0}^{\vert n\vert } 2^l \left\vert K_{2^l} \right\vert,
\end{equation}
where $|n|=:\max\{j\in\mathbb{N}, n_j\neq0\}$ and

\begin{eqnarray} \label{fn40}
\int_{G} K_n (x)d\mu(x)=1,  \ \ \ \ \ 
\sup_{n\in\mathbb{N}}\int_{G}\left\vert K_n(x)\right\vert d\mu(x)\leq 2.
\end{eqnarray}
Moreover, if $n>t,$ $t,n\in \mathbb{N},$ then 

\begin{equation}\label{lemma2}
K_{2^n}\left(x\right)=\left\{ \begin{array}{ll}
2^{t-1},& x\in I_t\backslash I_{t+1},\quad x-e_t\in I_n, \\
\frac{2^n+1}{2}, & x\in I_n, \\
0, & \text{otherwise.} \end{array} \right.
\end{equation}

The $n$-th N\"orlund  mean $t_n$ of the Fourier series of $f$ is defined by
\begin{equation} \label{1.2}
t_nf:=\frac{1}{Q_n}\overset{n}{\underset{k=1}{\sum }}q_{n-k}S_kf, 
\ \ \ \text{where } \ \ \ 
Q_n:=\sum_{k=0}^{n-1}q_k.
\end{equation}
Here $\{q_k:k\geq 0\}$ is a sequence of nonnegative numbers, where $q_0>0$ and
$$
\lim_{n\rightarrow \infty }Q_{n}=\infty .
$$
Then the summability method (\ref{1.2}) generated by $\{q_k:k\geq 0\}$ is regular if and only if (see \cite{moo})
\begin{equation*}
\underset{n\rightarrow \infty }{\lim }\frac{q_{n-1}}{Q_{n}}=0.  \label{1a11}
\end{equation*}

The representation
\begin{equation*}
t_nf\left(x\right)=\underset{G}{\int}f\left(t\right)F_n\left(x-t\right) d\mu\left(t\right)
\end{equation*}
play central roles in the sequel, where 
\begin{equation}\label{1.3T}
F_n=:\frac{1}{Q_n}\overset{n}{\underset{k=1}{\sum }}q_{n-k}D_k
\end{equation}
is called the kernels of the N\"orlund  means.

It is well-known (see e.g. \cite{PTWbook}) that  every  N\"orlund summability method generated by non-increasing  sequence $(q_k,k\in \mathbb{N})$ is regular, but N\"orlund means generated by non-decreasing  sequence $(q_k,k\in \mathbb{N})$ is not always regular. In this paper we investigate regular N\"orlund  means only.

If we invoke Abel transformation we get the following identities: 
\begin{eqnarray}  \label{2b}
Q_n:=\overset{n-1}{\underset{j=0}{\sum}}q_j=\overset{n}{\underset{j=1}{%
		\sum }}q_{n-j}\cdot 1 =\overset{n-1}{\underset{j=1}{\sum}}%
\left(q_{n-j}-q_{n-j-1}\right) j+q_0n
\end{eqnarray}
and
\begin{equation}  \label{2bbb}
t_nf=\frac{1}{Q_n}\left(\overset{n-1}{\underset{j=1}{\sum}}\left(
q_{n-j}-q_{n-j-1}\right) j\sigma_{j}f+q_0n\sigma_nf\right).
\end{equation}
\section{Main Results}
Based on estimate \eqref{aaa} we can prove our first main results:
\begin{theorem}\label{Corollary3nnconv} 
	Let $2^N\leq n< 2^{N+1}$ and ${{t}_{n}}$ be a regular N\"orlund  mean generated by non-decreasing sequence $\{q_k:k\in \mathbb{N}\},$ in sign $q_k \uparrow.$ Then, for any  $f\in L^p(G),$ where $1\leq p< \infty, $ the following inequality holds:
\begin{equation*}
    \Vert t_nf-f\Vert_p\leq \frac{18}{Q_n} \sum_{i=0}^{N-1}2^iq_{n-2^i}\omega_p\left(\frac{1}{2^i},f\right)+12\omega_p\left( \frac{1}{2^N},f\right).
\end{equation*}
\end{theorem}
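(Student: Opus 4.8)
The plan is to reduce the N\"orlund mean to Fej\'er means via the Abel transformation and then feed in the Fej\'er approximation estimate \eqref{aaa}. First I would start from identity \eqref{2bbb} and combine it with \eqref{2b}, which says precisely that the weights $(q_{n-j}-q_{n-j-1})\,j$ for $1\le j\le n-1$ together with $q_0 n$ sum to $Q_n$. Subtracting $f$ and using this normalization, I can write
\[
t_nf-f=\frac{1}{Q_n}\Bigl(\sum_{j=1}^{n-1}(q_{n-j}-q_{n-j-1})\,j\,(\sigma_j f-f)+q_0 n\,(\sigma_n f-f)\Bigr),
\]
so that $t_nf-f$ is a weighted average of the quantities $\sigma_j f-f$. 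Since $q_k\uparrow$, every coefficient $q_{n-j}-q_{n-j-1}$ is nonnegative, hence the triangle inequality applies cleanly and reduces the whole problem to estimating $\Vert\sigma_j f-f\Vert_p$.

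Next I would apply \eqref{aaa} to each term. For $j$ with $2^s\le j<2^{s+1}$ (so $|j|=s$) it gives $\Vert\sigma_j f-f\Vert_p\le 3\sum_{l=0}^{s}2^{l-s}\omega_p(1/2^l,f)$, and since $j/2^{s}<2$ we obtain $j\,\Vert\sigma_j f-f\Vert_p\le 6\sum_{l=0}^{s}2^{l}\omega_p(1/2^l,f)$; the remainder $\sigma_n f-f$ is treated the same way using $n/2^N<2$. The key bookkeeping step is then to interchange the order of summation: for a fixed scale $1/2^l$ the blocks that contribute are exactly those $j\ge 2^l$, and the inner sum $\sum_{j\ge 2^l}(q_{n-j}-q_{n-j-1})$ telescopes (just as in \eqref{2b}) to $q_{n-2^l}-q_0\le q_{n-2^l}$. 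This is the heart of the argument and is what produces the weights $2^l q_{n-2^l}$ appearing in the statement.

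It remains to collect the boundary contributions. The terms with $0\le l\le N-1$ assemble into a multiple of $\frac{1}{Q_n}\sum_{l=0}^{N-1}2^l q_{n-2^l}\omega_p(1/2^l,f)$, where the contribution coming from the $\sigma_n f-f$ remainder is absorbed into the same form via $q_0\le q_{n-2^l}$. For the top scale $l=N$ I would argue separately: since $q_k$ is non-decreasing, the $2^N$ indices $k=n-2^N,\dots,n-1$ satisfy $q_k\ge q_{n-2^N}$, whence $Q_n\ge 2^N q_{n-2^N}$, and likewise $Q_n\ge 2^N q_0$. These two inequalities convert the $l=N$ contributions from the block sum and from the $\sigma_n$ remainder into bounded multiples of $\omega_p(1/2^N,f)$, producing the isolated term $12\,\omega_p(1/2^N,f)$; tracking the accumulated absolute constants through the two previous steps yields the coefficient $18$ on the main sum.

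The main obstacle I anticipate is the double-sum interchange together with the telescoping in the second step: one must keep the ranges of $j$ and $l$ consistent (in particular noting that $j\le n-1<2^{N+1}$ forces $l\le N$) and correctly isolate the scale $l=N$ so that it is controlled by $Q_n\ge 2^N q_{n-2^N}$ rather than left inside the sum. Everything else is routine once the weights $q_{n-2^l}$ have emerged from the telescoping.
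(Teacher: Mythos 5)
Your proposal is correct and follows essentially the same route as the paper: reduce to Fej\'er means via \eqref{2b}--\eqref{2bbb}, apply \eqref{aaa} on dyadic blocks, interchange the order of summation so the differences $q_{n-j}-q_{n-j-1}$ telescope to $q_{n-2^l}$, and control the top scale $l=N$ by $Q_n\ge 2^N q_{n-2^N}$ and $Q_n\ge 2^N q_0$. The only difference is organizational (you telescope over all $j\ge 2^l$ in one sweep, while the paper splits the range at $j=2^N$ into $I_1$ and $I_2$), which if anything yields a slightly smaller constant and certainly implies the stated bound.
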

\begin{proof} 
	 Let $2^N\leq n<2^{N+1}.$ Since  $t_n$ are regular N\"orlund  means, generated by sequences of non-decreasing numbers $\{q_k:k\in \mathbb{N}\}$  by combining \eqref{2b} and \eqref{2bbb}, we can conclude that
	\begin{eqnarray*}
		&&\Vert t_nf(x)-f(x)\Vert_p \\
		&\leq&\frac{1}{Q_n}\left(\overset{n-1}{\underset{j=1}{\sum}}\left(q_{n-j}-q_{n-j-1}\right)j\Vert\sigma_jf(x)-f(x)\Vert_p+q_0n\Vert\sigma_nf(x)-f(x)
		\Vert_p\right)\\
		&:=&I+II,
	\end{eqnarray*}
Furthermore,
\begin{eqnarray*}
I&=&\frac{1}{Q_n}\overset{2^N-1}{\underset{j=1}{\sum}}\left(q_{n-j}-q_{n-j-1}\right)j\Vert\sigma_jf(x)-f(x)\Vert_p\\
&+&\frac{1}{Q_n}\overset{n-1}{\underset{j=2^N}{\sum}}\left(q_{n-j}-q_{n-j-1}\right)j\Vert\sigma_jf(x)-f(x)\Vert_p:=I_1+I_2.
\end{eqnarray*}

Now we estimate each terms separately. By using \eqref{aaa} for $I_1$ we obtain that\newline

\begin{eqnarray}\label{I_1}
I_1&\leq&\frac{3}{Q_n}\overset{N-1}{\underset{k=0}{\sum}}\overset{2^{k+1}-1}{\underset{j=2^k}{\sum}}\left(q_{n-j}-q_{n-j-1}\right)j  
\sum_{s=0}^{k} \frac{2^{s}}{2^{k}}\omega_p\left(1/2^s,f\right)\\ \nonumber
&\leq&\frac{3}{Q_n}\overset{N-1}{\underset{k=0}{\sum}}2^{k+1}\overset{2^{k+1}-1}{\underset{j=2^k}{\sum}}\left(q_{n-j}-q_{n-j-1}\right) 
\sum_{s=0}^{k} \frac{2^{s}}{2^{k}}\omega_p\left(1/2^s,f\right)\\\nonumber
&\leq&\frac{6}{Q_n}\overset{N-1}{\underset{k=0}{\sum}}\left(q_{n-2^k}-q_{n-2^{k+1}}\right) \sum_{s=0}^{k} 2^{s}\omega_p\left(1/2^s,f\right)\\\nonumber
&\leq&\frac{6}{Q_n}\overset{N-1}{\underset{s=0}{\sum}}2^{s}\omega_p\left(1/2^s,f\right)\sum_{k=s}^{N-1} \left(q_{n-2^k}-q_{n-2^{k+1}}\right)\\\nonumber
&\leq&\frac{6}{Q_n}\overset{N-1}{\underset{s=0}{\sum}}2^{s}q_{n-2^s}\omega_p\left(1/2^s,f\right).
\end{eqnarray}

It is evident that
\begin{eqnarray}\label{I_2}
	I_2&\leq&\frac{3}{Q_n}\overset{n-1}{\underset{j=2^N}{\sum}}\left(q_{n-j}-q_{n-j-1}\right)j  
	\sum_{s=0}^{N} \frac{2^{s}}{2^{N}}\omega_p\left(1/2^s,f\right)\\\nonumber
&\leq&\frac{3\cdot2^{N+1}}{Q_n}\overset{n-1}{\underset{j=2^N}{\sum}}\left(q_{n-j}-q_{n-j-1}\right)
\sum_{s=0}^{N} \frac{2^{s}}{2^{N}}\omega_p\left(1/2^s,f\right)\\\nonumber
&\leq&\frac{6q_{n-2^N}}{Q_n}
\sum_{s=0}^{N}2^{s}\omega_p\left(1/2^s,f\right)\leq\frac{6}{Q_n}\overset{N}{\underset{s=0}{\sum}}2^{s}q_{n-2^s}\omega_p\left(1/2^s,f\right)\\\nonumber
&\leq&\frac{6}{Q_n}\overset{N-1}{\underset{s=0}{\sum}}2^{s}q_{n-2^s}\omega_p\left(1/2^s,f\right)+6\omega_p\left(1/2^N,f\right).
\end{eqnarray}

For $II$ we have that
\begin{eqnarray*}
	II\leq\frac{3q_02^{N+1}}{Q_n}  
	\sum_{s=0}^{N} \frac{2^{s}}{2^{N}}\omega_p\left(1/2^s,f\right)\leq \frac{6}{Q_n}  
	\sum_{s=0}^{N-1}2^{s}q_{n-2^s}\omega_p\left(1/2^s,f\right)+6\omega_p\left(1/2^N,f\right).
\end{eqnarray*}

	The proof is complete.
\end{proof}

Our next main result reads:
\begin{theorem}\label{Corollary3nnconv0} 
	Let   ${{t}_{n}}$ be N\"orlund  mean generated by a non-increasing sequence $\{q_k:k\in \mathbb{N}\}$, in sign $q_k \downarrow$.
	Then, for any $f\in L^p(G),$ where $1\leq p< \infty, $ the inequality
	\begin{eqnarray*}
		\Vert t_{2^n}f-f\Vert_p\leq\sum_{s=0}^{n-1}\frac{2^s}{2^n}\omega _p\left(1/2^s,f\right)+3\overset{n-1}{\underset{s=0}{\sum}}\frac{n-s}{2^{n-s}}\frac{q_{2^{s}}}{q_{2^{n}}}\omega _p\left(1/2^s,f\right)+3\omega _p\left(1/2^n,f\right)
	\end{eqnarray*}
holds.
\end{theorem}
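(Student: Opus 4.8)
The plan is to pass to the kernel side and control the $L^1$-mass of the Nörlund kernel on dyadic annuli. Since $t_{2^n}f(x)=\int_G f(x-t)F_{2^n}(t)\,d\mu(t)$ and $\int_G F_{2^n}\,d\mu=1$ (because $\int_G D_k\,d\mu=1$ for every $k\ge 1$), Minkowski's integral inequality gives $\|t_{2^n}f-f\|_p\le\int_G\|f(\cdot-t)-f\|_p\,|F_{2^n}(t)|\,d\mu(t)$. Decomposing $G$ into the annuli $I_s\setminus I_{s+1}$ $(0\le s\le n-1)$ together with $I_n$, and using $\|f(\cdot-t)-f\|_p\le\omega_p(1/2^s,f)$ for $t\in I_s\setminus I_{s+1}\subset I_s$, I reduce the theorem to two kernel estimates: a bound of the form $\int_{I_s\setminus I_{s+1}}|F_{2^n}|\,d\mu\lesssim \frac{2^s}{2^n}+\frac{n-s}{2^{n-s}}\,\frac{q_{2^s}}{q_{2^n}}$ on each annulus, together with $\int_{I_n}|F_{2^n}|\,d\mu=O(1)$.

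To obtain a workable formula for $F_{2^n}$ I would not use the Abel identity \eqref{2bbb} of Theorem \ref{Corollary3nnconv}, since for $q_k\downarrow$ its leading coefficient $q_0 2^n/Q_{2^n}$ is $\ge 1$ and generally unbounded; instead I substitute $l=2^n-k$ in \eqref{1.3T} and apply \eqref{dn2.4} (with $m=l$), which telescopes to $F_{2^n}=D_{2^n}-\frac{w_{2^n-1}}{Q_{2^n}}\sum_{l=1}^{2^n-1}q_l D_l$. The Dirichlet term is harmless: by \eqref{dn2.3}, $\int_G(f(\cdot-t)-f)D_{2^n}=S_{2^n}f-f$ and $D_{2^n}$ is supported on $I_n$, so this piece contributes at most $\omega_p(1/2^n,f)$, which is absorbed by the last term of the theorem. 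Since $|w_{2^n-1}|=1$, it remains to estimate the annulus integrals of $\frac{1}{Q_{2^n}}\sum_{l=1}^{2^n-1}q_l D_l$.

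Writing $D_l=lK_l-(l-1)K_{l-1}$ and summing by parts, the monotonicity $q_l\ge q_{l+1}$ yields nonnegative weights: $\sum_{l=1}^{2^n-1}q_l D_l=q_{2^n-1}(2^n-1)K_{2^n-1}+\sum_{l=1}^{2^n-2}(q_l-q_{l+1})\,lK_l$. For the leading term I would use $(2^n-1)K_{2^n-1}=2^nK_{2^n}-D_{2^n}$ and the explicit values \eqref{lemma2}, giving $\int_{I_s\setminus I_{s+1}}|K_{2^n}|\,d\mu=2^{s-1-n}$; together with the decisive inequality $Q_{2^n}=\sum_{k=0}^{2^n-1}q_k\ge 2^n q_{2^n}$ (valid since $q_k\downarrow$), which forces $q_{2^n-1}2^n/Q_{2^n}\le 1$, this produces the first, Fejér-type sum $\sum_s\frac{2^s}{2^n}\omega_p(1/2^s,f)$. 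For the correction sum I would bound $l\int_{I_s\setminus I_{s+1}}|K_l|$ by \eqref{fn5} and \eqref{lemma2}, keeping only the genuinely high-frequency scales $2^r$ with $r>s$, group $l$ into the dyadic blocks $2^r\le l<2^{r+1}$ so that $\sum(q_l-q_{l+1})=q_{2^r}-q_{2^{r+1}}$ telescopes, and then Abel-sum over $r$ to get $\sum_{r>s}q_{2^r}\le (n-s)q_{2^s}$. Dividing by $Q_{2^n}\ge 2^n q_{2^n}$ converts the surviving weight $2^{s-1}$ and the block count $n-s$ into exactly the middle term $\frac{n-s}{2^{n-s}}\frac{q_{2^s}}{q_{2^n}}$.

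The hard part will be the low-frequency portion of the correction. The bound \eqref{fn5} splits $l|K_l|$ on $I_s\setminus I_{s+1}$ into the high-frequency part above (scales $>2^s$, which supply the required $(n-s)$ factor) and a low-frequency part (scales $\le 2^s$) whose annulus integral is of order $2^s/l$; after the telescoping $\sum(q_l-q_{l+1})$ this low-frequency part collapses to a $q_1$-type quantity that fits under neither target term, so the naive triangle inequality is too lossy here. To circumvent this I would avoid splitting at low frequencies and instead use that on $I_s\setminus I_{s+1}$ one has $D_l=w_{2^{s+1}\lfloor l/2^{s+1}\rfloor}\,D_{l\bmod 2^{s+1}}$ with $D_{l\bmod 2^{s+1}}$ constant on the annulus; the low digits of $l$ then factor out, the high digits reassemble into Walsh polynomials with monotone coefficients, and a further summation by parts in the high digits (again exploiting $q_k\downarrow$) shows this contribution is subsumed by the leading Fejér term rather than lost. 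Making this cancellation precise, and tracking the absolute constants down to the stated $1,3,3$, is the delicate step; the remaining bound $\int_{I_n}|F_{2^n}|\,d\mu=O(1)$ then follows routinely from \eqref{fn40} and \eqref{lemma2}.
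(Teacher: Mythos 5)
Your reduction coincides with the paper's own up to the decisive term. The identity $F_{2^n}=D_{2^n}-\frac{w_{2^n-1}}{Q_{2^n}}\sum_{l}q_{l}D_{l}$ from \eqref{dn2.4}, the Abel summation giving $\sum_{l}q_{l}D_{l}=\sum_{l}(q_{l}-q_{l+1})\,lK_{l}+q_{2^n-1}\bigl(2^{n}K_{2^n}-D_{2^n}\bigr)$, the disposal of the two Dirichlet pieces via \eqref{dn2.3}, and the estimate of the $K_{2^n}$ piece via \eqref{lemma2} together with $2^{n}q_{2^n-1}\le Q_{2^n}$ are exactly the paper's steps \eqref{1.21}--\eqref{2c} and its bounds for $I$, $III$, $IV$; these produce the first and third terms of the stated inequality. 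But the middle term can only come from the correction sum $\frac{1}{Q_{2^n}}\sum_{l}(q_{l}-q_{l+1})\,lK_{l}$, and that is precisely the part you do not prove: you explicitly defer ``the delicate step'' to an unexecuted program (factoring $D_{l}$ on each annulus through $D_{l\bmod 2^{s+1}}$ and summing by parts in the high digits of $l$). As written this is a plan rather than an argument, so the proposal is incomplete at the one place where the theorem's characteristic term is generated.

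The obstruction that pushes you into that detour is, moreover, an artifact of loose bookkeeping rather than a genuine failure of the direct route. The low‑frequency annulus mass is not uniformly of order $2^{s}/l$: by \eqref{fn5} and \eqref{lemma2}, for $2^{k}\le l<2^{k+1}$ one has $l\int_{I_{s}\setminus I_{s+1}}|K_{l}|\,d\mu\lesssim \sum_{r\le \min(k,s)}2^{2r-s}$, which is $O(2^{s})$ only when $l\ge 2^{s}$ and is $O(l^{2}/2^{s})$ for $l<2^{s}$. Keeping this $l$‑dependence, the telescoped weight attached to the annulus $I_{s}\setminus I_{s+1}$ is $q_{2^{s}}$ rather than the fatal $q_{1}$, and $\frac{2^{s}q_{2^{s}}}{Q_{2^n}}\le \frac{1}{2^{n-s}}\frac{q_{2^{s}}}{q_{2^n}}$ already sits under the middle term since $n-s\ge 1$; no extra cancellation is needed. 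The paper sidesteps the issue entirely by never integrating $|K_{j}|$ over annuli: for $2^{k}\le j<2^{k+1}$ it applies \eqref{fn5} and then the \emph{global} estimate \eqref{fejaprox2sssaaa}, namely $2^{r}\int_{G}\Vert f(\cdot+t)-f(\cdot)\Vert_{p}K_{2^{r}}(t)\,d\mu(t)\le\sum_{s=0}^{r}2^{s}\omega_{p}(1/2^{s},f)$, to each dyadic Fej\'er kernel, obtaining $3\sum_{r=0}^{k}\sum_{s=0}^{r}2^{s}\omega_{p}(1/2^{s},f)$ for each $j$; telescoping $(q_{j}-q_{j+1})$ over dyadic blocks and Abel‑summing twice (using $\sum_{l=s}^{n-1}q_{2^{l}}\le(n-s)q_{2^{s}}$ and $Q_{2^n}\ge 2^{n}q_{2^n}$) then yields the middle term with the constant $3$. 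You should either carry out the careful annulus count above or adopt the paper's global organization; the Walsh‑polynomial factorization you sketch is an unnecessary complication.
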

\begin{proof}
	By using \eqref{dn2.4} we find that
	\begin{eqnarray} \label{1.21}
	t_{2^n}f=D_{2^n}\ast f-\frac{1}{Q_{2^n}}\overset{2^n-1}{\underset{k=0}{\sum }}q_{k}\left(\left( w_{2^n-1}D_k\right)\ast f\right).
	\end{eqnarray}
	By using Abel transformation we get that
	\begin{eqnarray} 	\label{2cc}
	t_{2^n}f&=&D_{2^n}\ast f-\frac{1}{Q_{2^n}}\overset{2^n-2}{\underset{j=0}{\sum}}\left(q_j-q_{j+1}\right) j((w_{2^n-1}K_j)\ast f)\\ \notag
	&-& \frac{1}{Q_{2^n}}q_{2^n-1}(2^n-1)(w_{2^n-1}K_{2^n-1}\ast f)\\ \notag
	&=&D_{2^n}\ast f-\frac{1}{Q_{2^n}}\overset{2^n-2}{\underset{j=0}{\sum}}\left(q_j-q_{j+1}\right) j((w_{2^n-1}K_j)\ast f)\\ \notag
	&-& \frac{1}{Q_{2^n}}q_{2^n-1}2^n(w_{2^n-1}K_{2^n}\ast f)\\ \notag
	&+& \frac{q_{2^n-1}}{Q_{2^n}}(w_{2^n-1}D_{2^n}\ast f)
	\end{eqnarray}
	and
	\begin{eqnarray} 	\label{2c}
	&&t_{2^n}f(x)-f(x)
	=\int_G(f(x+t)-f(x))D_{2^n}(t)dt\\ \notag
	&-& \frac{1}{Q_{2^n}}\overset{2^n-2}{\underset{j=0}{\sum}}\left(q_j-q_{j+1}\right) j\int_G\left(f(x+t)-f(x)\right)w_{2^n-1}(t)K_j(t)dt\\  \notag
	&-& \frac{1}{Q_{2^n}}q_{2^n-1}2^n\int_G(f(x+t)-f(x))w_{2^n-1}(t)K_{2^n}(t)dt\\ \notag
	&+& \frac{q_{2^n-1}}{Q_{2^n}}\int_G(f(x+t)-f(x))w_{2^n-1}(t)D_{2^n}(t)dt\\ \notag
	&:=&I+II+III+IV.
	\end{eqnarray}
	By combining generalized Minkowski's inequality and  equality \eqref{dn2.3}  we find that
	$$\Vert I\Vert _p\leq \int_{I_n}\Vert f(x+t)-f(x))\Vert_p D_{2^n}(t)dt\leq \omega_p\left(1/2^n,f\right). $$
and
	$$\Vert IV\Vert _p\leq \int_{I_n}\Vert f(x+t)-f(x))\Vert_p D_{2^n}(t)dt\leq \omega_p\left(1/2^n,f\right) .$$

 Since
\begin{equation}\label{Q_n}
    2^nq_{2^n-1}\leq Q_{2^n}, \  n\in \mathbb{N} ,
 \end{equation}
If we combine \eqref{lemma2}, \eqref{Q_n} and generalized Minkowski's inequality, then  we get that
	\begin{eqnarray*}\label{fejaprox2}
		\Vert III\Vert_p
		&\leq&\int_{G}\left\Vert f\left( x+t\right) -f\left( x\right) \right\Vert_p
		K_{2^{n}}\left( t\right) d\mu (t)\\ \notag
		&=&\int_{I_{n}}\left\Vert f\left( x+t\right) -f\left( x\right) \right\Vert_p
		K_{2^{n}}\left( t\right) d\mu (t)\\
		&+&\sum_{s=0}^{n-1}\int_{I_{n}\left( e_{s}\right)}\left\Vert f\left( x+t\right) -f\left( x\right) \right\Vert_p K_{2^{n}}\left(t\right) d\mu (t)\\ \notag
		&\leq&\int_{I_{n}}\left\Vert f\left( x+t\right)-f\left( x\right) \right\Vert_p\frac{2^{n}+1}{2}d\mu(t)\\ \notag
		&+&\sum_{s=0}^{n-1}2^s\int_{I_n\left(e_s\right) }\left\Vert f\left(x+t\right)-f\left(x\right) \right\Vert_pd\mu(t)\\ \notag
		\\ \notag
		&\leq& \omega _{p}\left( 1/2^{n},f\right) \int_{I_{n}}\frac{2^{n}+1}{2}d\mu(t)
		+\sum_{s=0}^{n-1}2^{s}\int_{I_n\left(e_s\right)}\omega _{p}\left( 1/2^s,f\right)d\mu(t)\\ \notag
		&\leq&\omega _{p}\left( 1/2^{n},f\right)+\sum_{s=0}^{n-1}\frac{2^s}{2^n}\omega _p\left(1/2^s,f\right)\\
		&\leq& \sum_{s=0}^{n}\frac{2^s}{2^n}\omega _p\left(1/2^s,f\right).
	\end{eqnarray*}
From this estimate also it follows that	
\begin{eqnarray}\label{fejaprox2sssaaa}
\ \ \ \ \ 	2^n\int_{G}\left\Vert f\left( x+t\right) -f\left( x\right) \right\Vert_p K_{2^{n}}\left( t\right) d\mu (t)\leq \sum_{s=0}^{n}2^s\omega _p\left(1/2^s,f\right).
\end{eqnarray}
Let $2^k\leq j\leq 2^{k+1}-1.$ By applying  \eqref{fn5} and \eqref{fejaprox2sssaaa} we find that	
	\begin{eqnarray}\label{fejaprox22a}
	&&\left\Vert j\int_{G}\left\vert f\left( x+t\right) -f\left( x\right) \right\vert
	K_{j}\left( t\right) d\mu (t)\right\Vert_p\\ \notag
	&\leq& 3\sum_{s=0}^{k}2^s\int_{G}\left\Vert f\left( x+t\right) -f\left( x\right) \right\Vert_p
	K_{2^{s}}\left( t\right) d\mu (t)\\
	&\leq& 3\sum_{l=0}^{k}\sum_{s=0}^{l}2^s\omega _p\left(1/2^s,f\right).
	\end{eqnarray}
	Hence, by applying  \eqref{fn5} and \eqref{fejaprox22a} we find that	
	\begin{eqnarray*}
		\Vert II\Vert_p&\leq& \frac{1}{Q_{2^n}}\overset{2^n-1}{\underset{j=0}{\sum}}\left(q_j-q_{j+1}\right) j\int_G\Vert f(x+t)-f(x)\Vert_p \vert K_j(t)\vert dt\\  \notag
		&\leq&\frac{1}{Q_{2^n}}\overset{n-1}{\underset{k=0}{\sum}}\overset{2^{k+1}-1}{\underset{j=2^k}{\sum}}\left(q_j-q_{j+1}\right) j\int_G\Vert f(x+t)-f(x)\Vert_p \vert K_j(t)\vert dt\\  \notag
		&\leq & \frac{3}{Q_{2^n}}\overset{n-1}{\underset{k=0}{\sum}}\overset{2^{k+1}-1}{\underset{j=2^k}{\sum}}\left(q_j-q_{j+1}\right)\sum_{l=0}^{k}\sum_{s=0}^{l}2^s\omega _p\left(1/2^s,f\right)
		\\  \notag
		&\leq & \frac{3}{Q_{2^n}}\overset{n-1}{\underset{k=0}{\sum}}\left(q_{2^k}-q_{2^{k+1}}\right) \sum_{l=0}^{k}\sum_{s=0}^{l}2^s\omega _p\left(1/2^s,f\right)
		\\  \notag
		&\leq & \frac{3}{Q_{2^n}}\overset{n-1}{\underset{l=0}{\sum}}\sum_{k=l}^{n-1}\left(q_{2^k}-q_{2^{k+1}}\right)\sum_{s=0}^{l}2^s\omega _p\left(1/2^s,f\right)
		\\  \notag
		&\leq & \frac{3}{Q_{2^n}}\overset{n-1}{\underset{l=0}{\sum}}q_{2^l}\sum_{s=0}^{l}2^s\omega _p\left(1/2^s,f\right) 
		\leq  \frac{3}{Q_{2^n}}\overset{n-1}{\underset{s=0}{\sum}}2^s\omega _p\left(1/2^s,f\right)\sum_{l=s}^{n-1}q_{2^l}
		\\ \notag
		&\leq & \frac{3}{Q_{2^n}}\overset{n-1}{\underset{s=0}{\sum}}2^s\omega _p\left(1/2^s,f\right)q_{2^{s}}(n-s)\leq 3 \overset{n-1}{\underset{s=0}{\sum}}\frac{n-s}{2^{n-s}}\frac{q_{2^{s}}}{q_{2^{n}}}\omega _p\left(1/2^s,f\right).
	\end{eqnarray*}
	The proof is complete.	
\end{proof}

Finally, we state and prove the third main result.
\begin{theorem}\label{Corollary3nnconv1} 
Let $2^N\leq n< 2^{N+1}$ and  ${{t}_{n}}$ be N\"orlund  mean generated by non-increasing sequence $\{q_k:k\in \mathbb{N}\}$ (in sign $q_k \downarrow$) satisfying the condition
\begin{equation}\label{Cond}
\frac{1}{Q_n}=O\left(\frac{1}{n}\right),\text{\ \ as \ \ }n\rightarrow \infty
\end{equation}
	Then, for any $f\in L^p(G),$ where $1\leq p< \infty, $ we have the following inequality
\begin{eqnarray*}
\Vert t_nf-f\Vert_p \leq C\sum_{j=0}^{N}\frac{2^j}{2^N}\omega_p\left(1/2^j,f\right),
\end{eqnarray*}
where $C$ is a constant only depending on $p.$
\end{theorem}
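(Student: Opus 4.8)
The plan is to adapt the $\sigma_j$-decomposition from the proof of Theorem \ref{Corollary3nnconv}, and then to exploit both the non-increasing monotonicity and the extra hypothesis \eqref{Cond} to collapse the weights $q_{n-2^i}$ (which survive in Theorem \ref{Corollary3nnconv}) into a single constant, thereby reducing the estimate to the pure Fej\'er bound \eqref{aaa}. Since $\{q_k\}$ is non-increasing the method is automatically regular, so the identities \eqref{2b} and \eqref{2bbb} are available; as they are purely algebraic, subtracting $f$ gives
\[
t_nf-f=\frac{1}{Q_n}\Big(\sum_{j=1}^{n-1}(q_{n-j}-q_{n-j-1})\,j\,(\sigma_j f-f)+q_0 n\,(\sigma_n f-f)\Big).
\]
Taking $L^p$-norms and applying the triangle inequality, the coefficients appear with absolute value $|q_{n-j}-q_{n-j-1}|=q_{n-j-1}-q_{n-j}\ge 0$; this nonnegativity, opposite in form to the non-decreasing case, is where monotonicity enters.

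Next I would split $1\le j\le n-1$ into dyadic blocks $2^k\le j\le 2^{k+1}-1$ for $k=0,\dots,N$, the top block being truncated at $n-1$, estimate each $\|\sigma_j f-f\|_p$ by \eqref{aaa}, and bound $j<2^{k+1}$. Summing the differences over a full block telescopes to $q_{n-2^{k+1}}-q_{n-2^k}$, while the truncated top block contributes $q_0-q_{n-2^N}$. After interchanging the $k$- and $s$-summations, the inner sum $\sum_{k=s}^{N}(\cdots)$ telescopes a second time, and the decisive point is that---because the sequence is non-increasing---this collapses not to $q_{n-2^s}$ but to $q_0-q_{n-2^s}\le q_0$, a constant independent of $s$. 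This produces
\[
\frac{1}{Q_n}\sum_{j=1}^{n-1}|q_{n-j}-q_{n-j-1}|\,j\,\|\sigma_j f-f\|_p\le \frac{C q_0}{Q_n}\sum_{s=0}^{N}2^s\,\omega_p\!\left(1/2^s,f\right),
\]
and the leftover $\sigma_n$-term is handled directly via $q_0 n<q_0 2^{N+1}$ together with \eqref{aaa}, giving a contribution of the same shape.

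Finally I would invoke \eqref{Cond}: since $n\ge 2^N$ it yields $1/Q_n\le C_1/n\le C_1/2^N$, so the prefactor $q_0/Q_n$ multiplying $\sum_{s=0}^{N}2^s\omega_p(1/2^s,f)$ is $O(2^{-N})$, and absorbing $2^{-N}$ into the sum gives exactly $\sum_{j=0}^{N}\frac{2^j}{2^N}\omega_p(1/2^j,f)$. After the harmless normalization $q_0=1$, which changes neither $t_n$ nor \eqref{Cond}, the resulting constant is absolute, depending only on the implied constant in \eqref{Cond}.

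The step I expect to be the crux is the boundary block $k=N$ in the double telescoping: this block is only partial, so its telescoped contribution is $q_0-q_{n-2^N}$ rather than a clean dyadic difference, and one must verify that precisely this boundary term makes the second telescoping collapse to the constant $q_0$. Everything else is routine once \eqref{Cond} supplies the correct $2^{-N}$ scaling; it is exactly this hypothesis, combined with the non-increasing monotonicity, that removes the residual $q_{n-2^i}$-weights of Theorem \ref{Corollary3nnconv} and reduces the bound to the Fej\'er modulus-of-continuity estimate \eqref{aaa}.
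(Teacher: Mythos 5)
Your proposal is correct and follows essentially the same route as the paper: the decomposition via \eqref{2b}--\eqref{2bbb}, dyadic blocking with the Fej\'er estimate \eqref{aaa}, double telescoping of the differences $q_{n-j-1}-q_{n-j}$ down to the constant $q_0$, and finally \eqref{Cond} to convert $1/Q_n$ into the $2^{-N}$ scaling. The only (cosmetic) difference is that you telescope the truncated top block $2^N\le j\le n-1$ together with the others, whereas the paper treats it as a separate term $I_2$ bounded via $\sum_j (q_{n-j-1}-q_{n-j})j\le nq_0-Q_n$; both yield the same bound.
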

\begin{proof} 
	Let $2^N\leq n<2^{N+1}.$ Since  $t_n$ is a regular N\"orlund  means, generated by a sequence of non-increasing numbers $\{q_k:k\in \mathbb{N}\}$  by combining \eqref{2b} and \eqref{2bbb}, we can conclude that	
	\begin{eqnarray*}
		&&\Vert t_nf(x)-f(x)\Vert_p \\
		&\leq&\frac{1}{Q_n}\left(\overset{n-1}{\underset{j=1}{\sum}}\left(q_{n-j-1}-q_{n-j}\right)j\Vert\sigma_jf(x)-f(x)\Vert_p+q_0n\Vert\sigma_nf(x)-f(x)
		\Vert_p\right)\\
		&:=&I+II.
	\end{eqnarray*}	
	Furthermore,
	\begin{eqnarray*}
		I&=&\frac{1}{Q_n}\overset{2^{N}-1}{\underset{j=1}{\sum}}\left(q_{n-j-1}-q_{n-j}\right)j\Vert\sigma_jf(x)-f(x)\Vert_p\\
		&+& \frac{1}{Q_n}\overset{n-1}{\underset{j=2^{N}}{\sum}}\left(q_{n-j-1}-q_{n-j}\right)j\Vert\sigma_jf(x)-f(x)\Vert_p
		:=I_1+I_2.
\end{eqnarray*}
Analogously to \eqref{I_1} we get that
\begin{eqnarray*}
	I_1&\leq&\frac{2}{Q_n}\overset{N-1}{\underset{k=0}{\sum}}\left(q_{n-2^{k+1}}-q_{n-2^{k}}\right)
	\sum_{s=0}^{k} 2^{s}\omega_p\left(1/2^s,f\right)\\
	&\leq&\frac{2}{Q_n}\sum_{s=0}^{N-1}2^{s}	\omega_p\left(1/2^s,f\right)\overset{N-1}{\underset{k=s}{\sum}}\left(q_{n-2^{k+1}}-q_{n-2^{k}}\right)\\
	&=&\frac{2}{Q_n}\sum_{s=0}^{N-1}2^{s}	\omega_p\left(1/2^s,f\right)(q_{n-2^{N}}-q_{n-2^s})\\
	&\leq&\frac{2q_{n-2^{N}}}{Q_n}\sum_{s=0}^{N-1}2^{s}	\omega_p\left(1/2^s,f\right)\leq\frac{2q_0}{Q_n}\sum_{s=0}^{N-1}2^{s}	\omega_p\left(1/2^s,f\right).
\end{eqnarray*}
Moreover, analogously to \eqref{I_2} we find that

\begin{eqnarray*}
I_2&\leq&\frac{2}{Q_n}\overset{n-1}{\underset{j=1}{\sum}}\left(q_{n-j-1}-q_{n-j}\right)j  \sum_{s=0}^{N} \frac{2^{s}}{2^{N}}\omega_p\left(1/2^s,f\right)\\
&=&\frac{2}{Q_n}\left(nq_0-Q_{n}\right)\sum_{s=0}^{N} \frac{2^{s}}{2^{N}}\omega_p\left(1/2^s,f\right)\\
&\leq &\frac{2nq_0}{Q_n2^N}\sum_{s=0}^{N} 2^{s}\omega_p\left(1/2^s,f\right)\\
&\leq&\frac{2q_0}{Q_n}\sum_{s=0}^{N} 2^{s}\omega_p\left(1/2^s,f\right).
\end{eqnarray*}
	
For $II$ we have that
	\begin{eqnarray*}
II&\leq&\frac{q_02^{N+1}}{Q_n}\sum_{s=0}^{N} \frac{2^{s}}{2^{N}}\omega_p\left(1/2^s,f\right)\\
&\leq &\frac{2q_0}{Q_n} \sum_{s=0}^{N}2^{s}\omega_p\left(1/2^s,f\right)
\end{eqnarray*}
Hence, by using \eqref{Cond} we obtain the required inequality above so the proof is complete.
\end{proof}
As a consequence we obtain the following similar result proved in M\'oricz and Siddiqi \cite{Mor}:
\begin{corollary}
Let $\{ q_k : k \geq 0\}$ be a sequence of non-negative numbers such
that in the case $q_k \uparrow$ condition 
	\begin{equation} \label{fn00}
\frac{q_{n-1}}{Q_n}=O\left(\frac{1}{n}\right),\text{\ \ as \ \ }n\rightarrow \infty.
\end{equation}
 is satisfied, while in case $q_k \downarrow$ condition \eqref{Cond} is satisfied. If $f\in Lip(\alpha, p)$ for some $\alpha > 0$ and $1 \leq p< \infty,$ then
\begin{equation}\label{dn2.30}
\Vert t_nf-f\Vert_p=\left\{
\begin{array}{l}
\text{ }O(n^{-\alpha}),\text{\qquad \qquad  if\qquad  }0<\alpha<1, \\
\text{ }O(n^{-1}\log n),\text{\qquad  if
	\quad \ }\alpha=1,\\
\text{ }O(n^{-1}),\text{\qquad \qquad   if \quad \ \
	}\alpha>1,
\end{array}
\right.  
\end{equation}
\end{corollary}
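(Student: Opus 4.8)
The plan is to reduce both cases to the single Fej\'er-type master bound
\[
\|t_nf - f\|_p \leq C\sum_{j=0}^N \frac{2^j}{2^N}\,\omega_p\!\left(1/2^j, f\right),
\]
which is exactly the quantity whose Lipschitz behaviour was already recorded in the introduction right after \eqref{aaa}; once the master bound is in place, substituting the hypothesis $f\in Lip(\alpha,p)$ is routine.

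For the non-increasing case $q_k \downarrow$, condition \eqref{Cond} is precisely the standing hypothesis of Theorem \ref{Corollary3nnconv1}, so that theorem delivers the master bound with no further work. For the non-decreasing case $q_k \uparrow$, I would start from the conclusion of Theorem \ref{Corollary3nnconv},
\[
\|t_nf - f\|_p \leq \frac{18}{Q_n}\sum_{i=0}^{N-1} 2^i q_{n-2^i}\,\omega_p\!\left(1/2^i, f\right) + 12\,\omega_p\!\left(1/2^N, f\right),
\]
and exploit monotonicity: since $n - 2^i \leq n - 1$ for every $i\ge 0$, we have $q_{n-2^i}\le q_{n-1}$, whence $q_{n-2^i}/Q_n \le q_{n-1}/Q_n = O(1/n)$ by \eqref{fn00}. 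Because $2^N\le n< 2^{N+1}$, this is $O(1/2^N)$, so the first sum is dominated by $C\sum_{i=0}^{N-1}(2^i/2^N)\,\omega_p(1/2^i,f)$; absorbing the final term, which equals $\tfrac{2^N}{2^N}\omega_p(1/2^N,f)$, into the sum again yields the master bound. This monotonicity-plus-\eqref{fn00} reduction is the only genuinely case-specific step and the one place I expect to exercise care.

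Finally, I would insert $\omega_p(1/2^j, f) = O(1/2^{j\alpha})$ into the master bound, producing (up to the constant $C$) the geometric sum $2^{-N}\sum_{j=0}^N 2^{j(1-\alpha)}$, and evaluate it in the three regimes. For $\alpha>1$ the ratio $2^{1-\alpha}<1$ makes the series convergent, so the bound is $O(1/2^N)$; for $\alpha=1$ every term equals $1$, giving a sum of $N+1$ and a bound $O(N/2^N)$; for $\alpha<1$ the sum is controlled by its largest term $\asymp 2^{N(1-\alpha)}$, giving $O(1/2^{N\alpha})$. Converting back through $2^N \asymp n$ and $N \asymp \log n$ turns these into $O(n^{-1})$, $O(n^{-1}\log n)$, and $O(n^{-\alpha})$ respectively, which is exactly \eqref{dn2.30}. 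The whole of this last paragraph is elementary, so no obstacle arises beyond the reduction carried out in the increasing case.
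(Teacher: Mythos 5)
Your proposal is correct and follows essentially the route the paper intends: the corollary is stated as a direct consequence of Theorems \ref{Corollary3nnconv} and \ref{Corollary3nnconv1}, and your reduction of the non-decreasing case via $q_{n-2^i}\le q_{n-1}$ together with \eqref{fn00}, followed by the standard three-regime evaluation of $2^{-N}\sum_{j=0}^{N}2^{j(1-\alpha)}$ (already recorded after \eqref{aaa} for the Fej\'er means), is exactly the argument needed.
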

As a consequence we obtain the following similar result proved in M\'oricz and Siddiqi \cite{Mor}:
\begin{corollary} a) Let  ${{t}_{n}}$ be  N\"orlund  means generated by non-decreasing sequence $\{q_k:k\in \mathbb{N}\}$ satisfying regularity condition \eqref{fn00}. Then for some $f\in L^p(G),$ where $1 \leq p< \infty,$ 
	$$ \underset{n\rightarrow \infty }{\lim }\Vert {{t}_{n}}f-f\Vert_p\to 0, \ \ \ \text{as} \ \ \ n\to \infty.$$
	b)	Let  ${{t}_{n}}$ be N\"orlund  mean generated by non-increasing sequence $\{q_k:k\in \mathbb{N}\}$ satisfying condition \eqref{Cond}. Then for some $f\in L^p(G),$ where  $1 \leq p< \infty,$  
	$$ \underset{n\rightarrow \infty }{\lim }\Vert {{t}_{n}}f -f\Vert_p\to 0, \ \ \ \text{as} \ \ \ n\to \infty.$$
\end{corollary}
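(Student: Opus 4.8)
The plan is to derive both statements directly from the approximation inequalities already established, by showing that their right-hand sides vanish as $n\to\infty$. For part a) I would invoke Theorem \ref{Corollary3nnconv}, and for part b) Theorem \ref{Corollary3nnconv1}; in each case, writing $2^N\le n<2^{N+1}$ (so that $N\to\infty$ precisely when $n\to\infty$), it remains only to prove that the majorizing expression tends to $0$ for an arbitrary $f\in L^p(G)$. The one analytic input beyond these theorems is the classical fact that, for $f\in L^p(G)$ with $1\le p<\infty$, the $L^p$ modulus of continuity satisfies $\omega_p(1/2^s,f)\to 0$ as $s\to\infty$; this immediately disposes of the isolated terms $12\,\omega_p(1/2^N,f)$ in part a) and the $j=N$ summand in part b).

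The key auxiliary step, which I would isolate as a short lemma, is the following Toeplitz-type averaging claim: if $(a_s)$ is a sequence of nonnegative reals with $a_s\to 0$, then
\begin{equation*}
\frac{1}{2^N}\sum_{s=0}^{N}2^s a_s\longrightarrow 0\qquad (N\to\infty).
\end{equation*}
The proof is the standard splitting argument: given $\varepsilon>0$ choose $M$ with $a_s<\varepsilon$ for $s\ge M$, bound the tail by $\frac{1}{2^N}\sum_{s=M}^{N}2^s a_s\le \varepsilon\,\frac{1}{2^N}\sum_{s=0}^{N}2^s< 2\varepsilon$, and note that the fixed head $\frac{1}{2^N}\sum_{s=0}^{M-1}2^s a_s\to 0$ because its numerator does not depend on $N$. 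Applying this with $a_s=\omega_p(1/2^s,f)$ gives $\frac{1}{2^N}\sum_{j=0}^{N}2^j\omega_p(1/2^j,f)\to 0$, which is exactly the shape of the bound in Theorem \ref{Corollary3nnconv1}; hence part b) follows at once.

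For part a) the remaining work is to reduce the weighted sum in Theorem \ref{Corollary3nnconv} to this averaged form. Since $\{q_k\}$ is non-decreasing and $n-2^i\le n-1$ for every $i\ge 0$, I would use $q_{n-2^i}\le q_{n-1}$ together with the regularity hypothesis \eqref{fn00} and the inequality $n\ge 2^N$ to obtain
\begin{equation*}
\frac{1}{Q_n}\sum_{i=0}^{N-1}2^i q_{n-2^i}\,\omega_p\!\left(1/2^i,f\right)\le \frac{q_{n-1}}{Q_n}\sum_{i=0}^{N-1}2^i\,\omega_p\!\left(1/2^i,f\right)\le \frac{C}{2^N}\sum_{i=0}^{N-1}2^i\,\omega_p\!\left(1/2^i,f\right),
\end{equation*}
after which the averaging claim closes the argument. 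I do not expect a genuine obstacle: the analytic content is entirely contained in the three main theorems, and the only delicate point is the averaging lemma, whose subtlety lies solely in handling the head term uniformly in $N$. The monotonicity estimate $q_{n-2^i}\le q_{n-1}$ and the passage from the regularity condition \eqref{fn00} to a factor of order $2^{-N}$ are the routine glue connecting the general norm estimate to the convergence statement.
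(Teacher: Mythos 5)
Your proposal is correct and follows exactly the route the paper intends: the corollary is stated as a direct consequence of Theorems \ref{Corollary3nnconv} and \ref{Corollary3nnconv1}, and your derivation---using $\omega_p(1/2^s,f)\to 0$ for $f\in L^p(G)$ with $p<\infty$, the standard Toeplitz-type averaging lemma for $\frac{1}{2^N}\sum_{s=0}^{N}2^s a_s$, and in part a) the bound $q_{n-2^i}\le q_{n-1}$ combined with \eqref{fn00} and $n\ge 2^N$---is precisely the omitted glue. The paper gives no explicit proof, so your write-up is, if anything, more complete than the source.
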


\end{document}